\newtheorem{lemma}{Lemma}
\newtheorem{proposition}{Proposition}
\newtheorem{remark}{Remark}
\newtheorem{corollary}{Corollary}
\begin{document}
\def\Div{\mbox{\rm div}\,}
\def\Ric{\mbox{\rm Ric}}
\def\ker{\mbox{\rm ker}}

\title{Conformal great circle flows on the three-sphere}
\author[A. Harris]{Adam Harris}
\address{School of Science and Technology, University of New England,
Armidale, NSW 2351, Australia}
\email{adamh@une.edu.au}

\author[G.P. Paternain]{Gabriel P. Paternain}
\address{ Department of Pure Mathematics and Mathematical Statistics,
University of Cambridge,
Cambridge CB3 0WB, UK}
\email {g.p.paternain@dpmms.cam.ac.uk}

\begin{abstract} We consider a closed orientable Riemannian 3-manifold $(M,g)$ and a vector field $X$ with unit norm whose integral curves are geodesics of $g$. Any such vector field determines naturally a 2-plane bundle contained in the kernel of the contact form of the geodesic flow of $g$. We study when this 2-plane bundle remains invariant under two natural almost complex structures. We also provide a geometric condition that ensures that $X$ is the Reeb vector field of the 1-form $\lambda$ obtained by contracting $g$ with $X$.
We apply these results to the case of great circle flows on the 3-sphere with two objectives in mind: one is to recover the result in \cite{GG} that a volume preserving great circle flow must be Hopf and the other is to characterize in a similar fashion great circle flows that are conformal relative to the almost complex structure in the kernel of $\lambda$ given by rotation by $\pi/2$ according to the orientation of $M$.

 \end{abstract}

\maketitle

\section{Introduction}

Let $(M,g)$ be a closed orientable Riemannian three-dimensional manifold, and $X$ a smooth vector field of constant unit length. In this note we will be interested in the study of vector fields $X$ with the additional property that its integral curves are geodesics of $g$, i.e. $\nabla_{X}X=0$, where $\nabla$ is the Levi-Civita connection of $g$. These vector fields are called {\it geodesible} with respect to $g$.
The geodesibility condition is easily characterized in terms of the 1-form $\lambda$ obtained by contraction of $g$ by $X$. Indeed, since $\lambda(X)\equiv 1$ given any smooth vector field $Y$ we have
\begin{align*}
d\lambda(X,Y) &= X(\lambda(Y)) - Y(\lambda(X)) - \lambda([X,Y])\\
&= X(g(X,Y)) - g(X,[X,Y])\\
& = g(\nabla_{X}X,Y) + g(X,\nabla_{X}Y) - g(X,[X,Y]) \\
& = g(\nabla_{X}X,Y)+g(X,\nabla_{Y}X) \\
&=g(\nabla_{X}X,Y)
\end{align*}
since $0 = Y(g(X,X)) =2g(\nabla_{Y}X,X)$. Hence
$\nabla_{X}X=0$ if and only if the contraction $\iota_{X}d\lambda\equiv 0$.

The 1-form $\lambda$ is a contact form on
$M$ if $d\lambda\mid_{X^{\perp}}$ is non-degenerate. If that is the case, the calculation above shows
that $X$ is the Reeb vector field of $\lambda$. Our first observation provides a necessary geometric condition which ensures that $\lambda$ is a contact form. Recall that the Ricci curvature of $g$ can be seen as a function $\Ric:SM\to\mathbb{R}$, where $SM$ is the unit sphere bundle.

\medskip

\noindent {\bf Proposition 1.} {\it Let $(M,g)$ be a closed orientable 3-manifold and $X$ a unit vector field
with $\nabla_{X}X=0$. If $\Ric(X)>0$, then $\lambda$ is a contact form and $X$ is the Reeb vector field of $\lambda$.}

\medskip

The authors would like to mention that shortly after the circulation of this article, an independent but equivalent
result to the one above, formulated and derived in the context of the Newman-Penrose formalism of General Relativity Theory, was kindly shared with us by A. Aazami 
(\cite{A}).

The best illustration of the proposition and the motivating example for this note is the case of {\it great circle fibrations or flows}. We consider the special case of $(M,g)$ being the round $\mathbb{S}^3$. The integral curves of a vector field $X$ with $\nabla_{X}X=0$ give rise to a fibration by great circles. These were studied in detail in \cite{GW,GG} and we shall return to their results repeatedly but from a more general point of view. Proposition 1 tells us in particular that a great circle flow is the Reeb flow of the contact form $\lambda$.

Next we would like to discuss two almost complex structures which are canonically defined in the kernel of the contact form $\alpha$ of the geodesic flow of an orientable 3-manifold $(M,g)$.
One of them is the usual Levi-Civita $J$, which in fact is defined in $TTM$ on any Riemannian manifold, but the other, which we denote by $\mathbb{J}$, is specific to dimension three. 
Given $(p,v)\in SM$, we can define $j_{(p,v)}:v^{\perp}\to v^{\perp}$ as follows: given a unit vector $u\in v^{\perp}$, $j_{(p,v)}u$ is the unique unit vector in $v^{\perp}$ such that $\{u,j_{(p,v)}u,v\}$ is a positively oriented orthonormal basis of $T_{p}M$.  It is well known (cf. \cite{GPP}) that there is a splitting
 $\ker(\alpha) = H\oplus V$ into horizontal and vertical subspaces and that we may identify 
each $H_{(p,v)}$ and $V_{(p,v)}$ with 
$v^{\perp}$. Now, for each $\xi\in \ker(\alpha)_{(p,v)}$, define
\[{\mathbb J}_{(p,v)}(\xi):=(j_{(p,v)}\xi_{H},j_{(p,v)}\xi_{V}).\]
Let $X$ be a unit vector field geodesible with respect to $g$. The 2-plane bundle $X_{*}(X^{\perp})$ over $X(M)$ sits naturally inside $\ker(\alpha)$ and is invariant under the geodesic flow. We may now ask: when is $X_{*}(X^{\perp})$ invariant under $J$ or $\mathbb{J}$? Our interest in this question arises from the fact that when $\lambda$ is a contact form a type of local analytic branch-structure is observed for $j$-holomorphic cylinders in symplectizations $M\times\mathbb{R}$, asymptotic to closed orbits of a Reeb flow that is assumed to preserve the action of $j$ \cite{HP,HW}.
In this context, examples of such ``conformal" Reeb flow were studied by the authors on convex real hypersurfaces in $\mathbb{C}^{2}$, where the induced almost complex structure $j$ on $X^{\perp}$ can be seen explicitly to be of this type. In order to
provide a more intrinsic characterization of conformality, however, we offer the following answer to our question. Define $j_{p}:=j_{(p,X(p))}$ as above, and let $\Div X$ stand for the divergence of $X$ with respect to the metric $g$.

\medskip

\noindent{\bf Proposition A.} {\it Let $(M,g)$ be an orientable Riemannian 3-manifold and let $X$ be a unit vector field with $\nabla_{X}X=0$. We have:
\begin{enumerate}
\item $X_{*}(X^{\perp})$ is $J$-invariant if and only if $\Div X=0$ and $\Ric(X)=1$;
\item $X_{*}(X^{\perp})$ is $\mathbb{J}$-invariant if and only if the Lie derivative $\mathcal L_{X}j=0$.
\end{enumerate}
}

\medskip

We shall say that $X$, or the flow $\varphi_{t}$ of $X$, is {\em conformal} if $\mathcal L_{X}j=0$. This is equivalent to saying that the differential $(\varphi_{t})_{*}$ commutes with $j$. Note first that Proposition A allows us to distinguish 
conformal flows in a general sense from flows
that are volume-preserving, or ``divergence-free", though
in specific cases they may well coincide, as will be seen in section 3. In addition, the intrinsic nature of this characterization points to conformal three-dimensional Reeb flows on other manifolds for which $Ric(X) > 0$, and to which the results of \cite{HP,HW} would apply; for example, Lens spaces.

When $(M,g)$ is the round ${\mathbb S}^{3}$ the parameter space of geodesics corresponds
to the Grassmannian of oriented planes, ${\mathbb G}_{2}({\mathbb R}^{4})$. It has a product-structure 
${\mathbb S}^{2}_{-}\times{\mathbb S}_{+}^{2}$,
each factor of which is a unit sphere in one of the $\pm 1$-eigenspaces of the star
operator on $\bigwedge^{2}{\mathbb R}^{4}$. The Levi-Civita almost-complex structure $J$ induces a complex structure on the Grassmann manifold and in turn gives complex structures to the submanifolds ${\mathbb S}^{2}_{-}$ and ${\mathbb S}_{+}^{2}$.
All great circle fibrations of the 3-sphere
were shown in \cite{GW} to correspond to the graphs of distance-decreasing smooth
maps between ${\mathbb S}^{2}_{-}$ and ${\mathbb S}^{2}_{+}$, with constant maps corresponding
precisely to the Hopf fibrations, i.e. great circle fibrations given by the action of a 1-parameter subgroup of $SO(4)$.
The condition $\Div X=0$ was then shown in \cite{GG} to be equivalent to the requirement that such maps be {\em holomorphic}, hence a straightforward consequence of 
Liouville's Theorem is that a volume-preserving great circle fibration defined on 
all of ${\mathbb S}^{3}$ is Hopf.  The first item in Proposition A provides a more conceptual explanation
as to why this result from \cite{GG} holds. The second item in Proposition A provides
a mirror to the case examined in \cite{GG} thus yielding a characterization of conformal great circle flows. The almost complex structure $\mathbb{J}$ also induces a complex structure on the Grassmann manifold and by analyzing it and using Proposition A we derive:
 
\medskip

\noindent{\bf Theorem B.}  {\it Let $X$ be the vector field of a great circle flow of the round
${\mathbb S}^{3}$, or an open, connected subset of ${\mathbb S}^{3}$, fibred by great circles. Let 
$\Sigma\subset{\mathbb G}_{2}({\mathbb R}^{4})$ denote the parameter space of geodesics of this fibration. Then $X$ is conformal if and only if $\Sigma$ is representable as the graph of a 
strictly distance-decreasing function $F:{\mathcal U}\rightarrow{\mathbb S}^{2}_{+}$,
for ${\mathcal U}$ an open subset of ${\mathbb S}^{2}_{-}$ (or vice-versa, exchanging 
${\mathbb S}^{2}_{+}$ and ${\mathbb S}^{2}_{-}$),
such that $|F_{*}| < 1$ and $F$ is anti-holomorphic. 
Hence there is a one to one correspondence between conformal great circle flows
as represented by $F$, and volume-preserving great circle flows, represented by $\sigma\circ F$, where $\sigma$ is any orientation reversing isometry of $\mathbb{S}^{2}$. }

\medskip

As a direct corollary of Theorem B we deduce that a great circle flow on $\mathbb{S}^3$ is conformal if and only if it is Hopf.

\section{Geodesible vector fields on 3-manifolds and proof of Proposition A} 

 Let $M$ be a three-dimensional manifold with Riemannian metric $g$, and $X$ a smooth geodesible vector field of constant unit length.
Every such $X$ defines a smooth section $X:M\rightarrow SM$, where 
$SM$ is the 5-manifold corresponding to the unit-sphere bundle. Let $G$ denote the characteristic vector field of the geodesic flow on $SM$ and $\alpha$ the natural 1-form dual to $G$ in the Sasaki metric, then the foot-point projection $\pi:SM\rightarrow M$
is such that $\pi_{*}(G\mid_{X(M)}) = X$. Note that $\lambda = X^{*}\alpha$ and that
non-degeneracy of $d\lambda\mid_{X^{\perp}}$ is equivalent to the planes $X_{*}(X^{\perp}_{p})\subset \ker(\alpha)_{(p,X(p))}$ being nowhere Lagrangian in relation to the symplectic form $d\alpha\mid_{\ker(\alpha)}$. This last observation in particular
is dependent on the assumption that $M$ is three-dimensional. 

\begin{lemma} Let $(M,g)$ be a closed Riemannian 3-manifold.
 If $\Ric(X)>0$, then $d\lambda|_{X^{\perp}}$ is non-degenerate.
\end{lemma}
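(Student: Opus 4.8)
The plan is to reduce non-degeneracy to the non-vanishing of a single scalar and then to rule out its zeros by a Raychaudhuri-type focusing argument along the orbits of $X$. Since $\iota_{X}d\lambda=0$, the $2$-form $d\lambda$ is completely determined by its restriction to the $2$-plane field $X^{\perp}$; fixing a positively oriented orthonormal frame $\{e_{1},e_{2}\}$ of $X^{\perp}$, I would write $d\lambda|_{X^{\perp}}=h\,e^{1}\wedge e^{2}$, equivalently $\lambda\wedge d\lambda=h\,dV$, where $h:=d\lambda(e_{1},e_{2})$. A $2$-form on a $2$-dimensional space is non-degenerate precisely when it is nonzero, so the lemma is equivalent to the assertion that $h\neq 0$ at every point of $M$.

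The main computation is to analyze the $(1,1)$-tensor $A:=\nabla X$, which maps $TM$ into $X^{\perp}$ and annihilates $X$, so that $A|_{X^{\perp}}$ is a genuine endomorphism of the $2$-plane. Its antisymmetric part encodes $h$, since $d\lambda(Y,Z)=g(\nabla_{Y}X,Z)-g(\nabla_{Z}X,Y)$, while its trace is $\Div X$. Differentiating $A$ along $X$ and invoking the Jacobi equation yields the Riccati identity $\nabla_{X}A=-R(\cdot,X)X-A^{2}$. Splitting $A|_{X^{\perp}}=S+W$ into its symmetric and antisymmetric parts and reading off, first, the antisymmetric part and, second, the trace, I expect to obtain the two transport relations $X(h)=-(\Div X)\,h$ and $X(\Div X)=-\Ric(X)-\mathrm{tr}(S^{2})+\tfrac12 h^{2}$.

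These two relations are exactly what is needed. The first shows that the zero set $\{h=0\}$ is invariant under the (complete) flow of $X$, so that if $h$ vanished at some $p_{0}$ it would vanish along the entire bi-infinite orbit $\gamma$ through $p_{0}$. On that orbit the second relation collapses to $X(\Div X)=-\Ric(X)-\mathrm{tr}(S^{2})\le-\Ric(X)$. By compactness $\Ric(X)\ge\delta>0$ for a uniform constant $\delta$, so $\tfrac{d}{dt}(\Div X\circ\gamma)\le-\delta$, whence $\Div X$ would decrease to $-\infty$ along $\gamma$, contradicting the boundedness of the smooth function $\Div X$ on the closed manifold $M$. Therefore $h$ never vanishes and $d\lambda|_{X^{\perp}}$ is non-degenerate.

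The step I expect to require the most care is the Riccati computation, and in particular getting the curvature sign right so that $\Ric(X)$ enters $X(\Div X)$ with a negative coefficient; a useful consistency check is the Hopf field on the round $\mathbb{S}^{3}$, where $\Div X=0$, $S=0$, and the identity collapses to $\tfrac12 h^{2}=\Ric(X)=2$, i.e. $h=\pm 2$, matching the rotation $\nabla_{Y}X=\pm X\times Y$. The remaining subtlety is purely global: both the uniform positivity $\Ric(X)\ge\delta$ and the boundedness of $\Div X$, and hence the contradiction, rely essentially on $M$ being closed, which is precisely where completeness of the orbits of $X$ is used.
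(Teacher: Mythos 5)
Your proof is correct, but it takes a genuinely different route from the paper's. The paper lifts the problem to the unit sphere bundle: degeneracy of $d\lambda|_{X^{\perp}}$ at $p$ is reinterpreted as saying that $E=X_{*}(X^{\perp})$ is a Lagrangian subspace of $\ker(\alpha)$ at $(p,X(p))$; since $E$ is invariant under the geodesic flow and everywhere transversal to the vertical subbundle, a theorem of Ma\~n\'e forces the corresponding geodesic to be free of conjugate points, and this contradicts the classical fact that $\Ric(X)\geq\delta>0$ on a closed manifold produces conjugate points. You instead stay entirely on $M$ and run the Riccati/Raychaudhuri argument directly on $A=\nabla X$. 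Both of your transport relations check out: the antisymmetric part of $\nabla_{X}A=-R(\cdot,X)X-A^{2}$ gives $X(h)=-(\Div X)\,h$ because $S_{0}j+jS_{0}=0$ for a traceless symmetric $S_{0}$ in dimension two (this relation also follows in one line from $\mathcal L_{X}(\lambda\wedge d\lambda)=0$), and the trace part is precisely the paper's Lemma 3 once one writes $\mathrm{tr}(\beta^{2})=\mathrm{tr}(S^{2})-h^{2}/2$. The linear ODE for $h$ makes $\{h=0\}$ flow-invariant, and the forced blow-up of $\Div X$ along such an orbit contradicts compactness. In effect you have unpacked and reproved the conjugate-point input that the paper imports as a black box, so your argument is more elementary and self-contained; what the paper's formulation buys is the symplectic interpretation (degeneracy of $d\lambda$ $\Leftrightarrow$ $E$ Lagrangian), which also yields its Remark that the set $C=\{p:\ d\lambda_{p}=0\}$ consists of geodesics without conjugate points. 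One cosmetic discrepancy: the paper normalizes $\Ric$ as the average of the sectional curvatures, so on the round sphere its $\Ric(X)$ equals $1$ rather than your $2$; this affects neither argument.
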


\begin{proof} We shall use the following general result whose proof can be found in \cite{Man}. Fix $q := (p,v)\in SM$ and let $V_{q}$ be the vertical subspace given by the kernel of $\pi_{*,q}$, where $\pi:SM\to M$ is the foot-point projection.
Let $E$ be a Lagrangian subspace contained in the kernel of $\alpha$ at the point $q$, while $\phi_{t}$ denotes the geodesic flow.
The result in \cite{Man} asserts that if $(\phi_{t})_{*,q}(E)\cap V_{\phi_{t}(q)} 
=\{0\}$ for all $t\in{\mathbb R}$, then
the unique geodesic determined by $q$ is free of conjugate points.

Now let $X:M\to SM$ be the vector field of the circle fibration and set
$E:=X_{*}(X^{\perp})$. Note that $E$ is a plane bundle over $X(M)$
such that for every $(p,X(p)) = q\in X(M)$, $E_{q}$ is contained in the kernel of $\alpha$ at that point. Since $d\lambda=X^*d\alpha$, we see that $d\lambda|_{X^{\perp}}$ is degenerate at 
$p\in M$ if and only if $E_{q}$ is a Lagrangian subspace.
Note that $E_{\phi_{t}(q)} = (\phi_{t})_{*,q}(E_{q})$, i.e., $E$ is invariant under the geodesic flow. But the key observation now is that $E$ is transversal to the vertical subbundle $V$; indeed, since $\pi\circ X=Id$ we have $\pi_{*,q}(E_{q}) = X^{\perp}$.
Using the result in \cite{Man} we deduce that the geodesic determined by
$q$ is free of conjugate points. But this is impossible since it is well known that a unit speed geodesic $\gamma$ for which $\Ric(\gamma,\dot{\gamma})$ is bounded away from zero must have conjugate points \cite[Theorem 2.12]{Cha}.
\end{proof}

The result of Proposition 1, as stated in the introduction, now follows immediately.

\begin{remark}{\rm The proof above in fact gives something more general: suppose that $X$ is a unit norm geodesible vector field on a closed 3-manifold. Set $C:=\{p\in M:\;d\lambda_{p}=0\}$. Clearly $C$ is invariant under the flow of $X$ (since $\lambda$ is) and the argument
above implies that for any $p\in C$, the geodesic determined by $(p,X(p))$ is free of conjugate points.} 
\end{remark}

The next step is to introduce the Levi-Civita almost complex structure $J$ on $TTM$. Given the natural splitting
of $TTM = H\oplus V$ into ``horizontal" and ``vertical" subspaces, and the induced splitting of $\ker(\alpha)$, recall that for every $\xi = (\xi_{H},\xi_{V})\in 
\ker(\alpha)_{q}$ the induced action of $J$ corresponds to
\[J(\xi_{H},\xi_{V}) = (-\xi_{V},\xi_{H}) \]
(cf. \cite{GPP}).
For every $u\in X^{\perp}_{p}$, we have
\[ X_{*}(u)_{H} =u \ \hbox{and} \ X_{*}(u)_{V} = \nabla_{u}X\]
 and we let $\beta_{p}:X_{p}^{\perp}\to X_{p}^{\perp}$ be the linear map defined by
$\beta_{p}(u):=\nabla_{u}X$.
Since
\[J(u,\nabla_{u}X) = (-\nabla_{u}X, u) ,\]
$J$-invariance of $X_{*}(X^{\perp})$ then implies there exists $u'\in X^{\perp}
_{p}$ such that 
\[ u' = -\nabla_{u}X \ \hbox{and} \ \nabla_{u'}X =u,\]
or equivalently $\beta^{2}=-Id$.
We record these simple observations in the following lemma:

\begin{lemma} The subspace $X_{*}(X^{\perp}_{p})$ is invariant under the action of $J:\ker(\alpha)\rightarrow \ker(\alpha)$ 
if and only if $\beta^2=-Id$.
\label{lemma:facil}
\end{lemma}

The next general formula will be useful for our purposes.

\begin{lemma} Let $(M,g)$ be a Riemannian 3-manifold and let $X$ be a unit vector field with $\nabla_{X}X=0$ (i.e. $X$ is geodesible and has unit length). Then
\[-X(\text{\rm trace}(\beta))=2\Ric(X)+\text{\rm trace}(\beta^2).\]
\label{lemma:keyformula}
\end{lemma}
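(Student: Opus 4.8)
The plan is to prove the identity $-X(\text{trace}(\beta))=2\Ric(X)+\text{trace}(\beta^2)$ by working in a suitable orthonormal frame and expressing everything in terms of covariant derivatives of $X$. First I would set up a local orthonormal frame. Since $X$ has unit length and $\nabla_X X=0$, at each point I may complete $X$ to a positively oriented orthonormal frame $\{X, e_1, e_2\}$ where $e_1, e_2 \in X^\perp$. The operator $\beta_p(u)=\nabla_u X$ maps $X^\perp$ to $X^\perp$ (it stays in $X^\perp$ because $g(\nabla_u X, X)=\tfrac12 u(g(X,X))=0$), so $\text{trace}(\beta)=\sum_{i}g(\nabla_{e_i}X,e_i)$ and, recalling $\nabla_X X=0$, this trace agrees with $\Div X$. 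The quantity $\text{trace}(\beta^2)=\sum_{i}g(\nabla_{e_i}X,\beta e_i)=\sum_{i,j}g(\nabla_{e_i}X,e_j)g(\nabla_{e_j}X,e_i)$ is the sum over the squared entries and cross-terms of the $2\times2$ matrix of $\beta$.

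The heart of the argument is a Bochner-type computation relating the directional derivative $X(\text{trace}(\beta))$ to curvature. I would compute $X(\text{trace}(\beta))=X\big(\sum_i g(\nabla_{e_i}X,e_i)\big)$ by differentiating along $X$, taking care to account for how the frame $\{e_1,e_2\}$ itself moves under $\nabla_X$. Expanding $X\,g(\nabla_{e_i}X,e_i)=g(\nabla_X\nabla_{e_i}X,e_i)+g(\nabla_{e_i}X,\nabla_X e_i)$ and then commuting the second covariant derivative via the definition of the curvature tensor, $\nabla_X\nabla_{e_i}X-\nabla_{e_i}\nabla_X X=R(X,e_i)X+\nabla_{[X,e_i]}X$, the term $\nabla_{e_i}\nabla_X X$ vanishes by geodesibility. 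Summing $g(R(X,e_i)X,e_i)$ over $i=1,2$ produces precisely $\Ric(X)$ up to the conventions (here normalized so that $\Ric(X)=\sum_i g(R(X,e_i)X,e_i)$), which accounts for the factor $2\Ric(X)$ once the sign is tracked. The remaining terms — those involving $\nabla_{[X,e_i]}X$ and $g(\nabla_{e_i}X,\nabla_X e_i)$ together with the frame-derivative corrections — are exactly what reassemble into $\text{trace}(\beta^2)$ after bookkeeping.

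The main obstacle I anticipate is the careful bookkeeping of the frame-variation terms: because $\{e_1,e_2\}$ is not parallel along $X$, one must either choose the frame so that $\nabla_X e_i$ is controlled (for instance by parallel-transporting $e_1,e_2$ along the integral curve of $X$, which is legitimate since that curve is a geodesic), or else keep the connection coefficients $g(\nabla_X e_i, e_j)$ explicit and verify that their contribution cancels or recombines correctly. Choosing a frame parallel along the $X$-direction simplifies this considerably, since then $\nabla_X e_i=0$ and the only surviving quadratic contributions come from the $[X,e_i]$ terms, which unwind into $\sum_{i,j}g(\nabla_{e_i}X,e_j)g(\nabla_{e_j}X,e_i)=\text{trace}(\beta^2)$. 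The sign of this term and of the curvature term is where errors are most likely, so I would fix an explicit curvature and divergence convention at the outset and check the final identity against the Hopf example on $\mathbb{S}^3$, where $\beta$ is a rotation by $\pi/2$ (so $\beta^2=-\text{Id}$, $\text{trace}(\beta)\equiv 0$, $\text{trace}(\beta^2)=-2$) and $\Ric(X)=1$, giving $0=2-2$ as a consistency check.
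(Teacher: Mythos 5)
Your plan is correct and essentially identical to the paper's proof: choose a frame of $X^{\perp}$ parallel along the integral curve of $X$ (legitimate since that curve is a geodesic), commute $\nabla_{X}$ and $\nabla_{e_{i}}$ acting on $X$ via the curvature tensor so that geodesibility kills $\nabla_{e_{i}}\nabla_{X}X$ and the bracket term $\nabla_{[X,e_{i}]}X=-\beta^{2}(e_{i})$ supplies $\text{trace}(\beta^{2})$, then take inner products with $e_{i}$ and sum to produce the curvature term. The one detail to repair is your parenthetical normalization $\Ric(X)=\sum_{i}g(R(X,e_{i})X,e_{i})$: for the stated identity, and for $\Ric(X)=1$ on the round sphere as the paper uses, $\Ric(X)$ must be the \emph{average} of the sectional curvatures $K(X,e_{i})$ (whence the factor $2$), which is in fact the convention your Hopf consistency check $0=2\cdot 1-2$ is already implicitly using.
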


\begin{proof} Let $e$ be a unit parallel vector field along the geodesic determined by $(p,X(p))$ such that
$e\in X^{\perp}$. By definition of the Riemann curvature tensor:
\begin{align*}
R(X,e)X&=\nabla_{e}\nabla_{X}X-\nabla_{X}\nabla_{e}X+\nabla_{\nabla_{X}e}X-\nabla_{\nabla_{e}X}X\\
&=-\nabla_{X}\nabla_{e}X-\beta^{2}(e),\\
\end{align*}
since $\nabla_{X}X=\nabla_{X}e=0$. Taking inner product of the above equality with $e$ we derive
\[g(R(X,e)X,e)=-Xg(\nabla_{e}X,e)-g(\beta^{2}(e),e).\]
The term $g(R(X,e)X,e)$ is the sectional curvature of the $2$-plane spanned by $X$ and $e$, hence if we consider now a parallel orthonormal frame of $X^{\perp}$ and sum the identity above over each element in the frame we obtain
\[-2\Ric(X)=X(\text{\rm trace}(\beta))+\text{\rm trace}(\beta^{2}).\]

\end{proof}

With this we can now prove:

\begin{proposition} Let $(M,g)$ be a Riemannian 3-manifold and let $X$ be a unit vector field with $\nabla_{X}X=0$. Then $J$ leaves $X_{*}(X^{\perp})$ invariant if and only if $\Div X=0$ and $\Ric(X)=1$.
\label{prop:gengg}
\end{proposition}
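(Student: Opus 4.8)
The plan is to prove the proposition by combining the linear-algebraic characterization from Lemma~\ref{lemma:facil} with the trace identity from Lemma~\ref{lemma:keyformula}, working throughout with the endomorphism $\beta_p = \nabla_{(\cdot)}X$ of the $2$-dimensional space $X_p^\perp$. By Lemma~\ref{lemma:facil}, $J$-invariance of $X_*(X^\perp)$ is exactly the condition $\beta^2 = -\mathrm{Id}$, so the entire task reduces to translating this single operator equation into the two scalar conditions $\Div X = 0$ and $\Ric(X) = 1$. The first observation I would record is that $\Div X = \mathrm{trace}(\beta)$: since $\nabla_X X = 0$, the divergence of $X$ equals the trace of the full covariant derivative $\nabla X$, which on the orthogonal complement is precisely $\beta$, the $X$-direction contributing nothing.

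For the forward direction I would assume $\beta^2 = -\mathrm{Id}$ and read off both conclusions. Taking the trace gives $\mathrm{trace}(\beta^2) = -2$. For a $2\times 2$ matrix, the Cayley--Hamilton identity reads $\beta^2 - \mathrm{trace}(\beta)\,\beta + \det(\beta)\,\mathrm{Id} = 0$; comparing with $\beta^2 = -\mathrm{Id}$ forces $\mathrm{trace}(\beta) = 0$ (so $\Div X = 0$) and $\det(\beta) = 1$. Feeding $\mathrm{trace}(\beta) = 0$ and $\mathrm{trace}(\beta^2) = -2$ into Lemma~\ref{lemma:keyformula}, namely $-X(\mathrm{trace}(\beta)) = 2\Ric(X) + \mathrm{trace}(\beta^2)$, the left side vanishes because $\mathrm{trace}(\beta)$ is identically zero along every orbit, and we obtain $0 = 2\Ric(X) - 2$, i.e. $\Ric(X) = 1$.

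For the converse I would start from $\Div X = 0$ and $\Ric(X) = 1$. Then $\mathrm{trace}(\beta) = 0$ everywhere, so $X(\mathrm{trace}(\beta)) = 0$, and Lemma~\ref{lemma:keyformula} yields $\mathrm{trace}(\beta^2) = -2\Ric(X) = -2$. The point now is purely linear: a traceless $2\times2$ matrix $\beta$ satisfies, by Cayley--Hamilton, $\beta^2 = -\det(\beta)\,\mathrm{Id}$, whence $\mathrm{trace}(\beta^2) = -2\det(\beta)$, so $\det(\beta) = 1$ and therefore $\beta^2 = -\mathrm{Id}$. By Lemma~\ref{lemma:facil} this is exactly the $J$-invariance of $X_*(X^\perp)$, completing the equivalence.

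I expect the proof to be short and the main subtlety to be conceptual rather than computational: the essential point is recognizing that for a traceless endomorphism of a two-dimensional space the quadratic $\mathrm{trace}(\beta^2) = -2$ already determines $\beta^2 = -\mathrm{Id}$ via Cayley--Hamilton, so that the pointwise curvature condition $\Ric(X) = 1$ captures the ``almost complex'' normalization while $\Div X = 0$ captures tracelessness. The only thing to be careful about is justifying $X(\mathrm{trace}(\beta)) = 0$ in the converse, which follows because $\Div X = 0$ holds at every point of $M$ and hence $\mathrm{trace}(\beta)$ is the constant function zero, so its derivative in any direction, in particular along $X$, vanishes.
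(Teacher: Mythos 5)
Your proof is correct and follows essentially the same route as the paper's: reduce to $\beta^2=-\mathrm{Id}$ via Lemma~\ref{lemma:facil}, use Cayley--Hamilton on the $2\times 2$ endomorphism $\beta$ to relate $\mathrm{trace}(\beta)$, $\det\beta$ and $\mathrm{trace}(\beta^2)$, and close the loop with the trace identity of Lemma~\ref{lemma:keyformula}. The only cosmetic difference is that you spell out the vanishing of $X(\mathrm{trace}(\beta))$ and the Cayley--Hamilton steps a little more explicitly than the paper does.
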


\begin{proof} Observe first that since $\nabla_{X}X=0$, $\Div X=\text{\rm trace}(\beta)$.

Assume that $J$ leaves $X_{*}(X^{\perp})$ invariant. As we saw in Lemma \ref{lemma:facil}
we must have $\beta^2+Id=0$. Taking this as the characteristic polynomial of $\beta$ we see that
$\text{\rm trace}(\beta)=0$ and $\det\beta=1$. Using Lemma \ref{lemma:keyformula} we deduce that $-2\Ric(X)=\text{\rm trace}(\beta^2)=-2$.

Conversely, if $\Div X=0$ and $\Ric(X)=1$, Lemma \ref{lemma:keyformula} tells that
$\text{\rm trace}(\beta^2)=-2$ and since $\beta^2+\det\beta Id=0$ we obtain that
$\det\beta=1$ and thus $\beta^2+Id=0$. By Lemma \ref{lemma:facil} this implies that
$J$ leaves $X_{*}(X^{\perp})$ invariant.

\end{proof}

As we already mentioned in the introduction, there is an alternative almost complex structure $\mathbb{J}$
on $\ker(\alpha)$, apart from the Levi-Civita 
$J$, which is specific to dimension three. Recall that for  $\xi\in \ker(\alpha)_{q}$, we have
\[{\mathbb J}_{(p,v)}(\xi):=(j_{(p,v)}\xi_{H},j_{(p,v)}\xi_{V}).\]

\begin{proposition} The following are equivalent:
\begin{enumerate}
\item $X_{*}(X^{\perp})$ is $\mathbb{J}$-invariant;
\item $\mathcal L_{X}j=0$;
\item $j\beta=\beta j$;
\item $\beta^*+\beta=(\Div X)\,Id$, where $\beta^*$ is the adjoint of $\beta$ with the respect to $g$;
\item ${\mathcal L}_{X}g\mid_{X^{\perp}} = (\Div X)\,g$.
\end{enumerate}
\label{prop:jota}
\end{proposition}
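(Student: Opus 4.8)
The plan is to reduce all five conditions to linear-algebraic statements about the single endomorphism $\beta_{p}:X_{p}^{\perp}\to X_{p}^{\perp}$, $\beta_{p}(u)=\nabla_{u}X$, exploiting the fact (already recorded in the proof of Proposition~\ref{prop:gengg}) that $\Div X=\text{trace}(\beta)$ and that $X^{\perp}$ is two-dimensional. Concretely, I would establish the chain $(1)\Leftrightarrow(3)$, then $(3)\Leftrightarrow(4)\Leftrightarrow(5)$, and finally $(2)\Leftrightarrow(3)$, the last being the only step that requires genuine differential-geometric input rather than $2\times 2$ matrix bookkeeping.

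The equivalence $(1)\Leftrightarrow(3)$ is immediate from the graph description $X_{*}(X^{\perp})=\{(u,\beta u):u\in X^{\perp}\}$ together with the diagonal action $\mathbb{J}(u,\beta u)=(ju,j\beta u)$: this vector lies in the graph of $\beta$ for every $u$ precisely when $j\beta=\beta j$. For $(3)\Leftrightarrow(4)$ I would fix a positively oriented orthonormal frame $\{e_{1},e_{2}\}$ of $X_{p}^{\perp}$, so that $\{e_{1},e_{2},X\}$ is positively oriented and $j$ is the rotation $e_{1}\mapsto e_{2}\mapsto -e_{1}$, write $\beta=\begin{pmatrix}a&b\\c&d\end{pmatrix}$, and read off that both $j\beta=\beta j$ and $\beta^{*}+\beta=(a+d)\,Id=(\Div X)\,Id$ reduce to the same two scalar conditions $a=d$ and $b=-c$. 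For $(4)\Leftrightarrow(5)$ I would invoke the standard identity $(\mathcal{L}_{X}g)(u,v)=g(\nabla_{u}X,v)+g(u,\nabla_{v}X)=g((\beta+\beta^{*})u,v)$ for $u,v\in X^{\perp}$, which turns $(5)$ into $\beta+\beta^{*}=(\Div X)\,Id$ verbatim.

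The heart of the matter is $(2)\Leftrightarrow(3)$, i.e. identifying $\mathcal{L}_{X}j$ with the commutator $j\beta-\beta j$. First I would make $j$ into a genuine $(1,1)$-tensor on $M$ by extending it with $jX=0$, so that $\mathcal{L}_{X}j$ is well defined; since the flow of $X$ preserves $X$, the derivative $\mathcal{L}_{X}j$ again annihilates $X$ and preserves $X^{\perp}$, matching the shape of $j$. Using the torsion-free connection one has, for $u\in X^{\perp}$,
\[(\mathcal{L}_{X}j)(u)=(\nabla_{X}j)(u)-\nabla_{ju}X+j\nabla_{u}X=(\nabla_{X}j)(u)+j\beta u-\beta j u.\]
The key lemma is then $\nabla_{X}j=0$: an integral curve of $X$ is a unit-speed geodesic because $\nabla_{X}X=0$, so parallel transport along it preserves the metric, the orientation and the vector $X$ itself, hence carries a positively oriented orthonormal frame of $X^{\perp}$ to another such frame and therefore commutes with the rotation $j$. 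Granting this, $(\mathcal{L}_{X}j)(u)=j\beta u-\beta j u$, so $\mathcal{L}_{X}j=0$ is exactly $(3)$.

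I expect the main obstacle to be the clean justification of $\nabla_{X}j=0$, together with the care needed to define $\mathcal{L}_{X}j$ for a tensor built from $X$, the metric and the orientation; once this structural fact is established, the remaining four equivalences are routine once the graph picture and the frame computation above are in place.
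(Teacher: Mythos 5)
Your proposal is correct and follows essentially the same route as the paper: (1)$\Leftrightarrow$(3) via the graph description of $X_{*}(X^{\perp})$, (2)$\Leftrightarrow$(3) via the torsion-free connection and the identity $\mathcal{L}_{X}j=j\beta-\beta j$, (3)$\Leftrightarrow$(4) by two-dimensional linear algebra plus the trace computation, and (4)$\Leftrightarrow$(5) by the standard formula for $\mathcal{L}_{X}g$. The only difference is presentational: you isolate and justify $\nabla_{X}j=0$ via parallel transport along the geodesic integral curves, a step the paper compresses into the phrase ``by definition of $j$, $\nabla_{X}jY=j\nabla_{X}Y$''.
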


\begin{proof} By definition of $\mathbb{J}$ and $\beta$, $\mathbb{J}$-invariance of $X_{*}(X^{\perp})$ is equivalent to the following: given $u\in X_{*}(X^{\perp})$ there is $u'$ such that
\[(ju,j\beta(u))=(u',\beta(u')).\]
In other words, $j\beta=\beta j$ and thus (1) and (3) are equivalent.

Let $Y$ be a local section of $X^{\perp}$ around $p$. Since
\[[X,jY]=\mathcal L_{X}(jY)=(\mathcal L_{X}j)(Y)+j\mathcal L_{X}Y=(\mathcal L_{X}j)(Y)+j[X,Y]\]
we can use the symmetry of the Levi-Civita connection to derive
\[\nabla_{X}jY-\nabla_{jY}X=(\mathcal L_{X}j)(Y)+j\nabla_{X}Y-j\nabla_{Y}X.\]
But by definition of $j$, $\nabla_{X}jY=j\nabla_{X}Y$ and therefore
\[\mathcal L_{X}j= j\beta - \beta j\]
and thus (2) and (3) are equivalent.

It is elementary linear algebra that $j\beta=\beta j$ is equivalent to $\beta^*+\beta$ being a multiple of the identity. Since $\mbox{\rm trace}(\beta^*+\beta)=2\Div X$ the equivalence of (3) and (4) follows.

Finally note that given $u,v\in X^{\perp}$
\[\mathcal  L_{X}g(u,v)=g(\nabla_{u}X,v)+g(u,\nabla_{v}X)=g(u,(\beta^{*}+\beta)(v))\]
and hence (4) and (5) are equivalent.

\end{proof}

Proposition A follows right away from Propositions \ref{prop:gengg} and \ref{prop:jota}. We shall say that $X$ (or its flow) is conformal if any of the conditions in Proposition \ref{prop:jota} holds.

We conclude with the following observation, which is needed in the next section. It is a classical result that ${\mathcal L}_{G}J=0$ if and only if the sectional curvature of $g$ is constant and equal to one. In a similar vein we observe:

\begin{lemma} ${\mathcal L}_{G}\mathbb{J}=0$ if and only if $g$ has constant sectional curvature.
\label{lemma:jotainvariante}
\end{lemma}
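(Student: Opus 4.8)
The plan is to trivialise everything along a fixed geodesic by a parallel orthonormal frame and thereby reduce the tensorial identity $\mathcal{L}_{G}\mathbb{J}=0$ to a pointwise algebraic condition on the Jacobi operator. Fix $q=(p,v)\in SM$, let $\gamma$ be the geodesic it determines, and choose a parallel orthonormal frame $\{e_{1}(t),e_{2}(t)\}$ of $\dot\gamma(t)^{\perp}$ with $\{e_{1},e_{2},\dot\gamma\}$ positively oriented. Under the standard identification of $\ker(\alpha)$ with pairs consisting of a Jacobi field and its covariant derivative, $\xi=(\xi_{H},\xi_{V})$ corresponds to the Jacobi field $\mathcal{J}$ with $\mathcal{J}(0)=\xi_{H}$ and $\mathcal{J}'(0)=\xi_{V}$, and $(\phi_{t})_{*}\xi$ corresponds to $(\mathcal{J}(t),\mathcal{J}'(t))$. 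In the frame, the Jacobi equation $\mathcal{J}''+R(t)\mathcal{J}=0$, with $R(t)$ the symmetric matrix of the Jacobi operator $w\mapsto R(w,\dot\gamma)\dot\gamma$, shows that $(\phi_{t})_{*}|_{\ker\alpha}$ is the solution operator of $\dot S=A(t)S$, $S(0)=I$, where $A(t)=\begin{pmatrix}0 & I\\ -R(t) & 0\end{pmatrix}$.

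Next I would observe that, since $M$ is oriented and parallel transport lies in $SO(3)$, the rotation $j$ is parallel along $\gamma$; hence in the chosen frame both $j$ and $\mathbb{J}$ are represented by the constant matrices $j_{0}=\begin{pmatrix}0 & -1\\ 1 & 0\end{pmatrix}$ and $\mathrm{diag}(j_{0},j_{0})$. The identity $\mathcal{L}_{G}\mathbb{J}=0$ is equivalent to $(\phi_{t})_{*}$ commuting with $\mathbb{J}$ for all $t$, i.e.\ to $S(t)$ commuting with $\mathrm{diag}(j_{0},j_{0})$; differentiating and using invertibility of $S(t)$, this holds for all $t$ if and only if the generator $A(t)$ commutes with $\mathrm{diag}(j_{0},j_{0})$. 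A direct block computation gives $A\,\mathrm{diag}(j_{0},j_{0})=\begin{pmatrix}0 & j_{0}\\ -Rj_{0} & 0\end{pmatrix}$ and $\mathrm{diag}(j_{0},j_{0})A=\begin{pmatrix}0 & j_{0}\\ -j_{0}R & 0\end{pmatrix}$, so the condition reduces to $R(t)\,j_{0}=j_{0}\,R(t)$.

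The elementary linear-algebra step is then that a symmetric operator on the $2$-dimensional space $\dot\gamma^{\perp}$ commutes with the $\pi/2$-rotation $j_{0}$ if and only if it is a scalar multiple of the identity. Running this over all of $SM$, $\mathcal{L}_{G}\mathbb{J}=0$ is equivalent to the Jacobi operator $R_{v}:w\mapsto R(w,v)v$ being a multiple of the identity on $v^{\perp}$ for every unit vector $v$. Writing $R_{v}=\kappa(v)\,\mathrm{Id}$ gives $K(v,w)=\kappa(v)$ for all $w\perp v$, where $K$ denotes sectional curvature; the symmetry $g(R(w,v)v,w)=g(R(v,w)w,v)$ forces $\kappa(v)=\kappa(w)$ on orthonormal pairs, and since the unit sphere in each $T_{p}M$ is connected, $\kappa$ depends only on $p$, i.e.\ $g$ has pointwise constant sectional curvature. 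As $\dim M=3$ and $M$ is connected, Schur's Lemma upgrades this to globally constant sectional curvature. The converse is immediate: if the curvature is constant equal to $\kappa$, then $R(t)=\kappa I$ commutes with $j_{0}$, hence $A(t)$ commutes with $\mathbb{J}$ and the flow preserves $\mathbb{J}$.

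I expect the main obstacle to be the bookkeeping in the reduction step: correctly identifying $(\phi_{t})_{*}|_{\ker\alpha}$ with the Jacobi solution operator and verifying that $\mathbb{J}$ is represented by a constant matrix in the parallel frame, so that the Lie-derivative condition becomes the clean commutation $R\,j_{0}=j_{0}\,R$. Once this is in place the remaining linear algebra and the passage through the symmetry of the curvature tensor and Schur's Lemma are routine. It is worth noting that, unlike the classical statement for $J$ where the curvature is forced to equal $1$, here $\kappa$ is a priori unconstrained along each geodesic, which is precisely why the symmetry-plus-Schur argument is needed to obtain a genuinely constant curvature rather than merely a direction-dependent one.
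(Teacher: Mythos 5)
Your argument is correct and follows essentially the same route as the paper: both reduce $\mathcal{L}_{G}\mathbb{J}=0$ via the Jacobi equation to the pointwise commutation $R_{v}\,j=j\,R_{v}$, observe that a symmetric operator on a $2$-plane commuting with rotation by $\pi/2$ must be scalar, and then pass from $R_{v}=a(x,v)\,\mathrm{Id}$ to isotropic curvature and invoke Schur. Your explicit ODE/solution-operator bookkeeping and your use of the pair symmetry of the curvature tensor (rather than the paper's observation that two $2$-planes in a $3$-space share a common line) are only cosmetic variations on the same proof.
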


\begin{proof} The condition $\mathcal L_{G}\mathbb{J}=0$ is equivalent to saying that the differential of the geodesic flow commutes with $\mathbb{J}$ and this in turn is equivalent to saying that $j_{(\gamma,\dot{\gamma})}\mathcal J_{\xi}$ is a Jacobi field along the geodesic $\gamma$ determined by $(x,v)$ if $\mathcal J_{\xi}$ is the Jacobi field with initial conditions $\xi\in \ker\alpha_{(x,v)}$.
But using the Jacobi equation we see that $j_{(\gamma,\dot{\gamma})}\mathcal J_{\xi}$ is a Jacobi field if and only if 
$R_{\dot{\gamma}}\,j_{(\gamma,\dot{\gamma})}=j_{(\gamma,\dot{\gamma})}\,R_{\dot{\gamma}}$, where
$R_{v}:v^{\perp}\to v^{\perp}$ is the symmetric endomorphism $R_{v}(\cdot):=R(v,\cdot)v$. But this is equivalent to $R_{v}$ being of the form $R_{v}=a\,Id$ for some function $a(x,v)$. Finally this is also equivalent to having constant sectional curvature, for if $R_{v}$ has the form $a\,Id$ for all $(x,v)\in SM$ then the sectional curvature $K$ is isotropic (and hence by Schur's theorem it must be constant). Indeed, given
$x\in M$ and two different 2-planes $\sigma_{1},\sigma_{2}\subset T_{x}M$, they must contain a common vector $v$ of unit norm. 
Choose $u_1$ (resp. $u_{2}$) in $\sigma_{1}$ (resp. $\sigma_{2}$) with unit norm and orthogonal to $v$. Then
$K_{x}(\sigma_{1})=g(R_{v}(u_{1}),u_{1})=a(x,v)=g(R_{v}(u_{2}),u_{2})=K_{x}(\sigma_{2})$. 
\end{proof}

\section{Great circle flows on ${\mathbb S}^{3}$ and proof of Theorem B}

If it is assumed now that $G$ is a {\em regular}
vector field on $SM$ (i.e., around each point there exists a flow-box that is crossed
at most once by any integral curve), then it is well-known (cf. \cite[Theorem 7.2.5]{Ge}) that there exists 
a four-dimensional symplectic manifold $(N,\omega_{N})$ and a smooth fibration $\mu:SM\rightarrow N$ such that 

\begin{itemize}
\item $\mu^{-1}(\nu)\cong{\mathbb S}^{1}$ for all $\nu\in N$, 
\item $d\alpha = \mu^{*}\omega_{N}$. 
\end{itemize}

This fibration moreover induces a circle-fibration of $X(M)$, and hence of $M$ itself, in which
the fibres coincide with the geodesic integral curves of $X$. We denote this fibration $\eta:M
\rightarrow\Sigma$, where $\Sigma$ denotes the image of $X(M)$ under $\mu$. This is
a compact surface, the genus of which is fixed by the topology of $M$.  If $i:\Sigma\to N$ denotes inclusion,
and we let $\omega_{\Sigma} :=i^{*}\omega_{N}$, then $\eta^{*}\omega_{\Sigma} = d\lambda$ since $X^*d\alpha=d\lambda$.

We next consider the circumstances under which $J$ and ${\mathbb J}$ may
induce almost complex structures on the space of geodesics, $N$.  
For each vector $\xi\in \ker(\alpha)_{q}\cong T_{\mu(q)}N$ recall that there exists a unique orthogonal Jacobi field ${\mathcal J}_{\xi}$ along $\gamma$, hence define
\[{\bf i}\cdot{\mathcal J}_{\xi} := {\mathcal J}_{J\cdot\xi} \ .\]
This induces a well-defined almost complex structure on $T_{\nu}N$ if the equation 
is preserved by the flow of $G$, i.e., if
\[{\mathcal L}_{G}J = 0 \ .\]
Let $\hat{g}$ denote the Sasaki metric on $TTM$, then $\Omega$ will denote the 2-form
corresponding to the contraction of $J$ with $\hat{g}$. It is well-known that 
$\Omega = d\alpha$, and is non-degenerate, hence it defines a canonical symplectic
form on $TM$. Moreover, it follows directly from the Cartan formula that ${\mathcal L}
_{G}\Omega = 0$. Now we recall that it is a basic
exercise (cf. \cite{GPP}) to show that the geodesic flow determines an infinitesimal isometry of the metric, i.e., ${\mathcal L}_{G}\hat{g} = 0$, if and only if $(M, g)$
has constant sectional curvature equal to one. It follows that this condition is also 
necessary and sufficient for ${\mathcal L}_{G}J = 0$. Henceforth we will assume that 
$M={\mathbb S}^{3}$ with round metric $g$. Geodesics, in particular those corresponding 
to integral curves of $X$, will now be great circles induced on the sphere by planes
in ${\mathbb R}^{4}$, and $N$ will be the Grassmann manifold ${\mathbb G}_{2}
({\mathbb R}^{4})$. 

\begin{remark}{\rm Since obviously $\Ric(X)=1$,  the non-degeneracy of $\lambda$ follows directly from
Proposition 1. The non-degeneracy of $\lambda$ could also be rephrased, by saying that $\Sigma\subset (N,\omega_{N})$ is a symplectic surface. But it should be noted that the family of all such contact forms on 
${\mathbb S}^{3}$ is contained in a single contactomorphic equivalence class (cf. \cite{G}).
We recall that the associated principal circle bundles ${\mathcal P}$ all have Euler 
class $-1$, represented by the
curvature $d\lambda = \eta^{*}\omega_{\Sigma}$ of a given connection form $\lambda$.
By a well-known lemma of Moser, there exists a symplectomorphism $\psi:\Sigma\rightarrow
\Sigma'$ between any pair of associated surfaces, where $\omega_{\Sigma} = \psi^{*}
\omega_{\Sigma'}$. It follows that the principal ${\mathbb
S}^{1}$-bundle ${\mathcal P}\otimes\psi^{*}({\mathcal P}')^{*}$ on $\Sigma$ admits a flat
connection 1-form. Since each surface $\Sigma$ is diffeomorphic to ${\mathbb S}^{2}$,
the holonomy associated with a covariantly constant section of this
flat bundle is also trivial. The global trivialization thus obtained induces an
equivalence of ${\mathcal P}$ and ${\mathcal P}'$ and their respective connection
forms, hence a contact-isomorphism.}
      
\end{remark}     

By Lemma \ref{lemma:jotainvariante} we also have $\mathcal L_{G}\mathbb{J}=0$.
  Hence we can induce a second almost complex structure 
\[{\bf j}:T{\mathbb G}_{2}({\mathbb R}^{4})
\rightarrow T{\mathbb G}_{2}({\mathbb R}^{4}) .\] 
 In terms of the fibrations $\eta:{\mathbb S}^{3}\rightarrow\Sigma$ tangent to
geodesible vector fields $X$, we note that invariance of the 
subspace $T_{\sigma}\Sigma\subset T_{\sigma}{\mathbb G}_{2}({\mathbb R}^{4})$ under the complex multiplications by ${\bf i}$ or ${\bf j}$ are subject to the criteria given in Propositions \ref{prop:gengg} and \ref{prop:jota} respectively.

\begin{remark}{\rm One of the main results in \cite{GG} establishes the equivalence 
between $\Sigma$ ${\bf i}$-holomorphic and $X$ divergence-free.
Of course on the round 3-sphere we have $\Ric=1$, hence Proposition \ref{prop:gengg}
recovers this specific result without the need of explicit formulae for Jacobi fields 
in terms of trigonometric functions.}
\end{remark} 

For any smooth great-circle fibration of the round three-sphere, it was initially established in \cite[Theorems A and B]{GW} that $\Sigma\subset{\mathbb G}_{2}({\mathbb R}^{4}) = {\mathbb S}^{2}_{-}\times{\mathbb S}^{2}_{+}$ must correspond to the graph of a strictly distance-decreasing smooth map $F$ between ${\mathbb S}^{2}_{-}$ and ${\mathbb S}^{2}_{+}$, such that $|F_{*}| < 1$. As remarked above, a central result of \cite{GG} then identifies $F$ being
${\bf i}$-holomorphic with the equation $\Div X = 0$. These results remain valid if 
the domain and range of $F$ are merely open subsets of the two-sphere, such that
$\Sigma = \mbox{Graph}(F)$ lifts to a connected open subset fibred by great circles in ${\mathbb S}^{3}$.

Note that the product structure of ${\mathbb G}_{2}({\mathbb R}^{4})$ entails a natural 
splitting 
\[T{\mathbb G}_{2}({\mathbb R}^{4})\cong \varpi^{*}_{-}T{\mathbb S}^{2}_{-}\oplus\varpi^{*}_{+}
T{\mathbb S}^{2}_{+} \ ,\]
where $\varpi_{\pm}:{\mathbb G}_{2}({\mathbb R}^{4})\rightarrow{\mathbb S}^{2}_{\pm}$ denote the natural projections. The almost-complex structures, ${\bf i}$ and ${\bf j}$
will then induce complex structures on ${\mathbb S}^{2}_{\pm}$ if it can be shown that each of the direct summands above is invariant with respect to the action of ${\bf i}$ or 
${\bf j}$. Without such a property, the notion of pseudoholomorphic $\Sigma$ being
representable as the graphs of holomorphic functions is not well-defined. Every 
orthogonal Jacobi field ${\mathcal J}$ along a great circle $\gamma$ corresponds to a unique tangent vector in $T_{\eta\circ\gamma}{\mathbb G}_{2}({\mathbb R}^{4})$. This correspondence can be made explicit by computing the geodesic in the Grassmann manifold 
that lifts to an $s$-parameter variation of great circles around $\gamma$, the 
derivative of which determines ${\mathcal J}$ when $s = 0$ (cf. \cite{W}). Given the homogeneity of ${\mathbb S}^{3}$
under the orthogonal group-action, we may assume without loss of generality that
$\gamma$ is defined by the intersection with the three-sphere of the plane generated
by basis vectors ${\bf e}_{1}, {\bf e}_{2}$ in ${\mathbb R}^{4}$, hence $\eta\circ\gamma$ corresponds to ${\bf e}_{1}\wedge{\bf e}_{2}\in{\mathbb G}_{2}({\mathbb R}^{4})$. Relative to
this framing we consider the orthogonal Jacobi ``spiral" fields
\[ {\mathcal J}_{\xi} := \cos(t){\bf e}_{3} + \sin(t){\bf e}_{4} \ ,\]
\[ {\mathcal J}_{\xi'}:= \cos(t){\bf e}_{3} - \sin(t){\bf e}_{4} \ .\]
Note that any orthogonal pair of Jacobi fields can be written in this 
form after an appropriate rotation of the $e_{1},e_{2}$ and $e_{3}, e_{4}$-planes
respectively. 
As discussed in \cite{GG}, these fields extend and project to geodesics $\hat{\gamma}
_{1}(s)$, $\hat{\gamma}_{2}(s)$ in the Grassmann manifold such that
\[\left.\frac{\partial}{\partial s}\right|_{s=0} \hat{\gamma}_{1} = {\bf e}_{1}\wedge{\bf e}_{4} - {\bf e}_{2}\wedge{\bf e}_{3}\in \varpi_{-}^{*}T{\mathbb S}^{2}_{-} \ ,\]
and similarly
\[\left.\frac{\partial}{\partial s}\right|_{s=0} \hat{\gamma}_{2}\in \varpi_{+}^{*}
T{\mathbb S}^{2}_{+} \ .\]
We claim now that the complex multiplication by ${\bf i}$ or ${\bf j}$ of either of these
tangent vectors remains respectively in $\varpi_{\pm}^{*}T{\mathbb S}^{2}_{\pm}$. In particular, note that 
\begin{align*}
{\bf i}\left.\frac{\partial}{\partial s}\right|_{s=0} \hat{\gamma}_{1}\leftrightarrow 
{\mathcal J}_{J\xi}
&= -\dot{\mathcal J}_{\xi} = -{\mathcal J}_{{\mathbb J}\xi} = -j{\mathcal J}_{\xi}\\
& \leftrightarrow -{\bf j}
\left.\frac{\partial}{\partial s}\right|_{s=0} \hat{\gamma}_{1}\\
& = {\bf e}_{2}\wedge{\bf e}_{4} + {\bf e}_{1}\wedge{\bf e}_{3}\in\varpi^{*}_{-}T{\mathbb S}^{2}_{-} \ ,\nonumber
\end{align*}
While a similarly elementary calculation yields
\begin{align*}
{\bf i}\left.\frac{\partial}{\partial s}\right|_{s=0} \hat{\gamma}_{2}\leftrightarrow 
{\mathcal J}_{J\xi'} 
& = -\dot{\mathcal J}_{\xi'}  = {\mathcal J}_{{\mathbb J}\xi'} = 
j{\mathcal J}_{\xi'}\\
& \leftrightarrow {\bf j}
\left.\frac{\partial}{\partial s}\right|_{s=0}\hat{\gamma}_{2}\in\varpi^{*}_{+}T{\mathbb S}
^{2}_{+} \ .\\
\end{align*}
In summary, multiplication by ${\bf j}$ induces clockwise rotation by $\frac{\pi}{2}$
in $T{\mathbb S}^{2}_{\pm}$, while multiplication by ${\bf i}$ induces clockwise rotation
in $T{\mathbb S}^{2}_{+}$ and anti-clockwise rotation in $T{\mathbb S}^{2}_{-}$.

Now ${\bf i}$-invariance of $T_{\sigma}\Sigma$ implies ${\bf i}\cdot F_{*} = F_{*}\cdot
{\bf i}$ at each point in the domain of $F$, hence $F$ is ${\bf i}$-holomorphic.
Similarly ${\bf j}$-invariance implies ${\bf j}\cdot F_{*} = F_{*}\cdot{\bf j}$, and
by the above calculations, this says equivalently that at each point of its domain,
\[-{\bf i}\cdot F_{*} = F_{*}\cdot{\bf i} \ ,\]
i.e., $F$ is {\em anti}-holomorphic with respect to ${\bf i}$. We thus have

\medskip

\noindent{\bf Theorem B.}  {\it Let $X$ be the vector field of a great circle flow of the round
${\mathbb S}^{3}$, or an open, connected subset of ${\mathbb S}^{3}$, fibred by great circles. Let 
$\Sigma\subset{\mathbb G}_{2}({\mathbb R}^{4})$ denote the parameter space of geodesics of this fibration. Then $X$ is conformal if and only if $\Sigma$ is representable as the graph of a 
strictly distance-decreasing function $F:{\mathcal U}\rightarrow{\mathbb S}^{2}_{+}$,
for ${\mathcal U}$ an open subset of ${\mathbb S}^{2}_{-}$ (or vice-versa, exchanging
${\mathbb S}^{2}_{+}$ and ${\mathbb S}^{2}_{-}$),
such that $|F_{*}| < 1$ and $F$ is anti-holomorphic. 
Hence there is a one to one correspondence between conformal great circle flows
as represented by $F$, and volume-preserving great circle flows, represented by $\sigma\circ F$, where $\sigma$ is any orientation reversing isometry of $\mathbb{S}^{2}$. }

\medskip
As pointed out in \cite{GG}, if $F$ is in fact an entire function, then the 
distance-decreasing property forces $F$ to be bounded, and hence constant, by
Liouville's Theorem. Those great-circle fibrations for which the mapping $F$
is constant are precisely the standard Hopf fibrations, as originally noted
in \cite{GW}. Hence a global corollary of \cite[Theorem B]{GG} (cf. also
\cite[Theorem A]{GG})
is that all volume-preserving great-circle fibrations of ${\mathbb S}^{3}$ are
Hopf. By precisely the same argument, we have

\begin{corollary} A great-circle flow of ${\mathbb S}^{3}$ is conformal if and
only if it is Hopf.
\end{corollary}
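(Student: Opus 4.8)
The plan is to read the corollary off directly from Theorem B, the decisive new ingredient being \emph{globality}. For a great circle flow defined on all of $\mathbb{S}^3$ the fibration $\eta:\mathbb{S}^3\to\Sigma$ has $\Sigma$ a graph over the \emph{entire} two-sphere, so the representing map $F:\mathbb{S}^2_-\to\mathbb{S}^2_+$ is defined on all of $\mathbb{S}^2_-$ rather than on a proper open subset $\mathcal{U}$. I would therefore begin by noting that, by Theorem B, $X$ is conformal if and only if this globally defined $F$ is strictly distance-decreasing ($|F_*|<1$) and anti-holomorphic with respect to $\mathbf{i}$.

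Next I would reduce the anti-holomorphic case to the holomorphic one. Composing with an orientation-reversing isometry $\sigma$ of $\mathbb{S}^2$ turns $F$ into the holomorphic map $\sigma\circ F$, and since $\sigma$ is an isometry the bound $|(\sigma\circ F)_*|=|F_*|<1$ is preserved; this is precisely the correspondence recorded in the second half of Theorem B, matching conformal flows (represented by $F$) with volume-preserving ones (represented by $\sigma\circ F$). In either formulation the problem collapses to a single rigidity statement: a globally defined holomorphic self-map $H:\mathbb{S}^2_-\to\mathbb{S}^2_+$ of the round two-sphere with $|H_*|<1$ everywhere must be constant.

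This rigidity step is the heart of the matter, and I would establish it by the Liouville argument already used in the text for the volume-preserving case. Since $\mathbb{S}^2_-$ and $\mathbb{S}^2_+$ are congruent (each a unit sphere in an eigenspace of the star operator), a strictly distance-decreasing $H$ cannot be onto; it therefore omits a value, and placing that value at infinity under stereographic projection exhibits $H$ as an entire function with image in a compact region, hence bounded, hence constant by Liouville's Theorem. (Alternatively one argues by area: for a holomorphic map the Jacobian equals $|H_*|^2<1$, so with $\omega_\pm$ the area forms one has $\deg(H)\,\mathrm{Area}(\mathbb{S}^2_+)=\int_{\mathbb{S}^2_-}H^*\omega_+<\mathrm{Area}(\mathbb{S}^2_-)=\mathrm{Area}(\mathbb{S}^2_+)$, forcing $\deg(H)=0$.) I expect this to be the main obstacle, since it is exactly where global compactness of $\mathbb{S}^3$ enters in an essential way; the statement genuinely fails over the open subsets $\mathcal{U}\subsetneq\mathbb{S}^2$ permitted in Theorem B, where non-constant distance-decreasing holomorphic maps are plentiful.

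Finally I would close by invoking \cite{GW}: the great circle fibrations whose representing map is constant are precisely the Hopf fibrations. Hence $X$ conformal implies $F$ constant implies Hopf. The converse is immediate, for a constant map has $F_*=0$, so $|F_*|=0<1$ and $F$ is trivially anti-holomorphic; by Theorem B the corresponding Hopf flow is therefore conformal. This gives the stated equivalence.
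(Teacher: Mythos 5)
Your proof is correct and follows essentially the same route as the paper: Theorem B reduces conformality to the globally defined representing map $F$ being anti-holomorphic and strictly distance-decreasing, and the Liouville argument already used for the volume-preserving case (via the correspondence $F\leftrightarrow\sigma\circ F$) forces $F$ to be constant, hence the fibration is Hopf by \cite{GW}. The extra detail you supply (omitting a value to set up Liouville, the degree-theoretic alternative, and the trivial converse for constant $F$) is exactly what the paper's phrase ``by precisely the same argument'' leaves implicit.
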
 

A conformal $X$ is therefore necessarily divergence-free
if it is globally defined, but the local situation (i.e., ${\mathcal U}$ a proper
subset of ${\mathbb S}^{2}$) is generally less rigid. If $X$ is Killing, however,
 then $F$ must be both holomorphic and anti-holomorphic. We therefore note the following rigidity:

\begin{corollary} A great-circle fibration of an open connected subset of ${\mathbb S}^{3}$ is tangent to a local Killing vector field $X$ if and only if it is the restriction of a
standard Hopf fibration on all of ${\mathbb S}^{3}$.
\end{corollary}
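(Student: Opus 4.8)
The plan is to deduce this rigidity directly from the two equivalences already at our disposal: conformality of $X$ corresponds to $F$ being anti-holomorphic (Theorem B), while $\Div X=0$ corresponds to $F$ being holomorphic (the result of \cite{GG}, recovered here via Proposition \ref{prop:gengg}). The crucial observation is that a local Killing field satisfies \emph{both} hypotheses at once, which over-determines $F$ and forces it to be constant.

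First I would check that a Killing $X$ is simultaneously divergence-free and conformal. The Killing condition ${\mathcal L}_{X}g=0$ amounts to $g(\nabla_{Y}X,Z)+g(\nabla_{Z}X,Y)=0$ for all $Y,Z$; restricting to $Y,Z\in X^{\perp}$ this reads $\beta^{*}+\beta=0$, while tracing the same equation over an orthonormal frame (with the $X$-direction contributing nothing since $\nabla_{X}X=0$) gives $\Div X=\text{\rm trace}(\beta)=0$. Criterion (5) of Proposition \ref{prop:jota}, namely ${\mathcal L}_{X}g\mid_{X^{\perp}}=(\Div X)\,g$, then holds trivially with both sides vanishing, so $X$ is conformal.

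Next I would superimpose the two equivalences. By Theorem B conformality makes $F$ anti-holomorphic, and by \cite{GG} (via Proposition \ref{prop:gengg}) the vanishing of $\Div X$ makes $F$ holomorphic. A smooth map between the Riemann surfaces ${\mathbb S}^{2}_{-}$ and ${\mathbb S}^{2}_{+}$ that is at once holomorphic and anti-holomorphic has identically vanishing differential, hence is locally constant; since the domain ${\mathcal U}$ is connected, $F$ is constant. As recorded in \cite{GW}, constant $F$ characterizes exactly the standard Hopf fibrations, so the given fibration is the restriction to (the lift of) $\text{\rm Graph}(F)$ of a Hopf fibration of all of ${\mathbb S}^{3}$. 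For the converse I would simply observe that a standard Hopf fibration is the orbit foliation of a one-parameter subgroup of $SO(4)$ acting by isometries, hence is tangent to a genuine Killing field on ${\mathbb S}^{3}$; its restriction to any open connected subset remains tangent to the restriction of that field, which is still Killing there.

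There is no serious computational obstacle, as the statement follows by combining two equivalences already proved. The only points needing care are verifying that the Killing hypothesis, used purely locally on ${\mathcal U}$, indeed feeds \emph{both} equivalences, and confirming that ``holomorphic and anti-holomorphic'' forces $F$ to be constant on the connected domain ${\mathcal U}$ rather than merely locally constant.
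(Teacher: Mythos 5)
Your proposal is correct and follows the paper's own (very terse) argument exactly: a Killing field is both divergence-free and conformal, so $F$ is simultaneously holomorphic and anti-holomorphic, hence constant, hence the fibration is Hopf. The extra details you supply — the verification via $\beta^{*}+\beta=0$ and criterion (5) of Proposition \ref{prop:jota}, and the converse via the $SO(4)$ action — are all sound and merely fill in what the paper leaves implicit.
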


\end{document}